\documentclass[letterpaper, 10 pt, conference]{ieeeconf}
\IEEEoverridecommandlockouts
\usepackage[T1]{fontenc}

\usepackage{amsmath,amssymb}
\usepackage{mathtools}
\usepackage{bm}
\usepackage{subfig}
\usepackage{algorithm}
\usepackage{algpseudocode}
\usepackage{tabularx}  

\newtheorem{definition}{Definition}

\newtheorem{proposition}{Proposition}
\newtheorem{theorem}{Theorem}

\newtheorem{remark}{Remark}
{}

\title{\LARGE \bf Computationally Efficient Density-Driven Optimal Control via Analytical KKT Reduction and Contractive MPC}
\author{Julian Martinez$^{1}$ and Kooktae Lee$^{1}$
\thanks{*This work was supported by NSF CAREER Grant CMMI-DCSD-2145810.}
\thanks{$^{1}$Julian Martinez and Kooktae Lee are with the Department of Mechanical Engineering, New Mexico Institute of Mining and Technology, Socorro, NM 87801, USA, email: julian.martinez@student.nmt.edu, kooktae.lee@nmt.edu.}  
}
\date{June 2025}

\begin{document}

\maketitle
\thispagestyle{empty}
\pagestyle{empty}

\begin{abstract}
Efficient coordination for collective spatial distribution is a fundamental challenge in multi-agent systems. Prior research on Density-Driven Optimal Control (D$^2$OC) established a framework to match agent trajectories to a desired spatial distribution. However, implementing this as a predictive controller requires solving a large-scale Karush-Kuhn-Tucker (KKT) system, whose computational complexity grows cubically with the prediction horizon. To resolve this, we propose an analytical structural reduction that transforms the $T$-horizon KKT system into a condensed quadratic program (QP). This formulation achieves $O(T)$ linear scalability, significantly reducing the online computational burden compared to conventional $O(T^3)$ approaches. Furthermore, to ensure rigorous convergence in dynamic environments, we incorporate a contractive Lyapunov constraint and prove the Input-to-State Stability (ISS) of the closed-loop system against reference propagation drift. Numerical simulations verify that the proposed method facilitates rapid density coverage with substantial computational speed-up, enabling long-horizon predictive control for large-scale multi-agent swarms.
\end{abstract}
\section{Introduction}

Multi-agent systems (MAS) have emerged as a transformative framework for executing complex tasks through the coordination of autonomous entities such as UAVs and mobile robots. A fundamental challenge in this coordination is the spatial distribution control problem. Traditionally viewed as a static coverage task, modern missions in precision agriculture \cite{seo2025tcst}, environmental monitoring \cite{notomista2022multi}, and surveillance \cite{du2017pursuing} now require agents to manage their collective presence as a continuous process. This necessitates a density matching approach where the swarm's spatial histogram tracks a desired distribution over time. Specifically, many scenarios demand that the swarm maintain a specific time-averaged presence over a given horizon to ensure thorough and continuous coverage of the task domain \cite{seo2025smcs}.

Translating macroscopic distribution goals into microscopic control laws is addressed through Eulerian and Lagrangian perspectives. The Eulerian approach treats the MAS as a continuous field governed by Partial Differential Equations (PDEs). Notable methods include mean-field feedback control \cite{zheng2021transporting} and backstepping-based formation control \cite{freudenthaler2020pde}. A related class of Eulerian techniques, such as Heat Equation Driven Area Coverage (HEDAC) \cite{hedac2017}, utilizes virtual potential fields to guide agents. While these frameworks provide rigorous analysis, they often focus on static or terminal matching. Even recent distributed online optimization techniques \cite{krishnan2025distributed} suffer from slow convergence or increased communication overhead as the predictive horizon extends.

Conversely, the Lagrangian approach focuses directly on discrete agent trajectories. While traditional approaches focus on static Voronoi-based coverage \cite{cortes2004} or stochastic vehicle routing for persistent service \cite{pavone2010adaptive}, and recent optimal transport-based frameworks \cite{lee2026optimal} primarily address terminal matching, significant progress has also been made in time-averaged behavior through ergodicity. A pioneering Lagrangian contribution is the spectral multiscale coverage framework \cite{mathew2011metrics}, which uses Fourier-based metrics to track target densities. Although robust, these spectral methods are primarily gradient-driven and often struggle to incorporate high-order dynamics or ensure formal optimality regarding control effort, especially over complex, multi-step predictive horizons.

Building upon these principles, the Density-Driven Optimal Control (D$^2$OC) framework provides a rigorous solution for optimal time-averaged distribution matching \cite{seo2025smcs, seo2025tcst, lee2025lcss}. However, existing D$^2$OC methodologies are hindered by $O(T^3)$ computational complexity and lack robust stability guarantees against reference drift. This paper overcomes these bottlenecks via a Contractive Lyapunov-based MPC framework with the following primary contributions: 
1) Analytical KKT Condensation: Unlike prior D$^2$OC works limited by $O(T^3)$ complexity, we derive an exact analytical reduction to $m$-dimensional QPs. This reduces the online cost to $O(T)$, enabling long-horizon implementation; 2) ISS under Reference Drift: We establish the first rigorous stability framework for D$^2$OC using a contractive Lyapunov constraint, proving the Input-to-State Stability (ISS) of the closed-loop system against reference propagation drift;
3) Real-time Decentralized Scalability: The proposed framework enables independent local optimization while maintaining global distribution matching. Simulations verify that the linear computational growth allows for high-fidelity control of large-scale swarms in real-time.

\section{Problem Description}
\noindent\textbf{Notations}: $\mathbb{R}^n$ and $\mathbb{R}^{n \times m}$ denote $n$-dimensional real vectors and $n \times m$ real matrices, respectively. $\mathbf{I}_n$ is the $n \times n$ identity matrix, and $\mathbf{1}_T$ denotes a $T$-dimensional vector of ones. The matrix transpose is $A^\top$, and $\otimes$ denotes the Kronecker product. $Q \succeq \mathbf{0}$ ($R \succ \mathbf{0}$) indicates that $Q$ is positive semi-definite ($R$ is positive definite). A discrete distribution is represented by a set of pairs $\{(q_j, \gamma_j)\}$, where $q_j$ and $\gamma_j$ are the position and weight of the $j$-th sample point, respectively.

\subsection{Multi-Agent System Modeling}
Consider a multi-agent system consisting of $N$ agents. Each agent $i \in \{1, \dots, N\}$ is governed by discrete-time Linear Time-Invariant (LTI) dynamics:
\begin{equation}
\mathsf{x}_i^{k+1} = A_i \mathsf{x}_i^k + B_i u_i^k, \quad y_i^k = C_i \mathsf{x}_i^k, \label{eqn: LTI system}
\end{equation}
where $k$ is a discrete-time index, $\mathsf{x}_i \in \mathbb{R}^n$ is the state, $u_i \in \mathbb{R}^m$ is the control input, and $y_i \in \mathbb{R}^d$ is the agent's position. It is important to note that the proposed D$^2$OC framework inherently supports fully distributed control, as the optimization for each agent is decoupled. While we consider heterogeneous dynamics ($A_i, B_i, C_i$) to emphasize the generality of our approach, the subscripts are omitted in the following sections for notational simplicity without loss of generality.

\subsection{Decentralized Density-Driven Optimal Control (D$^2$OC)}
The objective of D$^2$OC is to drive the empirical time-averaged distribution $\mu^k = \{(y_i^k, \alpha_i)\}_{i=1}^M$ to match a prescribed reference distribution $\nu = \{(q_j, \beta_j)\}_{j=1}^{N_{SP}}$. Here, $y_i^k$ and $q_j$ denote the spatial positions, while the weights are uniformly defined as $\alpha_i = 1/M$ and $\beta_j = 1/N_{SP}$, representing the normalized contribution of each sample point to its respective distribution. Although the details are described in \cite{seo2025tcst,seo2025smcs}, the D$^2$OC framework is composed of a three-stage iterative cycle as follows:
\begin{itemize}
    \item \textbf{Stage A: Local Sample Selection \& Control:} Agent $i$ identifies a local set of sample points (SPs), $\mathcal{S}^k$, near the current agent position $y_i^k$, and solves a $T$-horizon optimization problem to align its trajectory with the local density.
    \item \textbf{Stage B: Weight Adjustment:} After movement, agent $i$ reduces the weight of SPs at its current location by a certain mass amount to account for coverage progress.
    \item \textbf{Stage C: Information Exchange:} Agents share updated SPs' weights with neighbors within a communication range threshold to maintain a consistent view of the global coverage state.
\end{itemize}
Iterative execution of these stages ensures that the multi-agent system effectively sweeps high-priority regions encoded by the SPs.

This paper focuses on Stage A, where the matching between the agent's future trajectory and the local SPs is formulated via the Wasserstein distance, which serves as a metric to quantify the dissimilarity between two distributions \cite{villani2009optimal}. The optimization problem for agent $i$ at time $k$ is formulated with horizon $T$ by:
\begin{equation}
\begin{aligned}
    \min_{u_i} \quad & J = \Phi(\mathsf{x}_i^{k+T}) + \sum_{\ell=k}^{k+T-1} L(\mathsf{x}_i^{\ell}, u_i^{\ell}) \\
   \text{s.t.} \quad & \mathsf{x}_i^{k+1} = A\mathsf{x}_i^k + Bu_i^k,
\end{aligned} \label{eqn: minimization problem}
\end{equation}
where the stage cost $L(\cdot)$ and terminal cost $\Phi(\cdot)$ are defined as:
\vspace{-.15in}
\begin{align*}
    L(\mathsf{x}_i^{\ell}, u_i^{\ell}) &= \frac{1}{2}(\mathcal{W}_i^{\ell|k})^2 + \frac{1}{2}(\mathsf{x}_i^{\ell})^\top Q \mathsf{x}_i^{\ell} + \frac{1}{2}(u_i^{\ell})^\top R u_i^{\ell}, \\
    \Phi(\mathsf{x}_i^{k+T}) &= \frac{1}{2}(\mathcal{W}_i^{k+T|k})^2 + \frac{1}{2}(\mathsf{x}_i^{k+T})^\top Q \mathsf{x}_i^{k+T}.
\end{align*}
Here, $Q \succeq \mathbf{0}$ and $R \succ \mathbf{0}$ are weighting matrices. The term $(\mathcal{W}_i^{\ell|k})^2$, referred to as the local Wasserstein distance, is defined as:
\vspace{-.15in}
\begin{equation}
    (\mathcal{W}_i^{\ell|k})^2 = \min_{\gamma_j \ge 0} \sum_{j \in \mathcal{S}^k} \gamma_j \|y_i^{\ell} - q_j\|^2,
\end{equation}
where $q_j$ denotes the SP position and $\gamma_j$ the mass transport plan from each $y_i$ to $q_j$. Note that index $\ell$ denotes the predicted time step within the horizon $T$.

\begin{theorem}[Analytical Solution \cite{seo2025smcs}]\label{Theorem: LTI D2OC Optimal control}
    For the LTI system \eqref{eqn: LTI system}, the optimal control sequence $\bar{u}_i^k:=[(u_i^k)^{\top},\cdots,(u_i^{k+T-1})^{\top}]^{\top}$ minimizing \eqref{eqn: minimization problem} is analytically given by:
    \begin{equation}
        \bar{u}_i^k = \mathcal{E}_{13}^{\top} F_{1} + \mathcal{E}_{23}^{\top} F_{2}, \label{eqn: optimal_u_2}
    \end{equation}
    where $\mathcal{E}_{ij}$ denotes the $(i,j)$-th block submatrix of $E^{-1}$ and the constituent blocks of the KKT matrix $E$ are defined as:
    {\allowdisplaybreaks
    \begin{align}
        &E_{11}=\bar{Q} \otimes \mathbf{I}_{T}, \ E_{23} = E_{32}^{\top} = B \otimes \mathbf{I}_{T}, \ E_{33} = R \otimes \mathbf{I}_{T}, \nonumber \\
        &E_{13}=E_{31}=E_{22}=\mathbf{0}, \ \bar{Q}=\Big\{\big(\sum_{j \in \mathcal{S}^{k}} \gamma_j\big)C^{\top}C+ Q\Big\}, \nonumber \\
        &E_{12} = E_{21}^{\top} = 
        \text{\footnotesize $
        \begin{bmatrix}
        -\mathbf{I}_n & A^{\top} & \mathbf{0} & \cdots & \mathbf{0} \\
        \mathbf{0} & -\mathbf{I}_n & A^{\top} & \ddots & \vdots \\
        \vdots & \ddots & \ddots & \ddots & \mathbf{0} \\
        \vdots & & \ddots & -\mathbf{I}_n & A^{\top} \\
        \mathbf{0} & \cdots & \cdots & \mathbf{0} & -\mathbf{I}_n
        \end{bmatrix} $}. \label{eqn: E and F}
    \end{align}}
    Moreover, $F_1$ and $F_2$ in \eqref{eqn: optimal_u_2} are defined by:
    \begin{equation*}
        \small
        F_1 = 
        \begin{bmatrix}
        (\sum_{j \in \mathcal{S}^{k}} \gamma_j)C^{\top}\overline{q}^{k}\\
        \vdots\\
        (\sum_{j \in \mathcal{S}^{k}} \gamma_j)C^{\top}\overline{q}^{k}
        \end{bmatrix}, \,
        F_2 = 
        \begin{bmatrix}
        -A\mathsf{x}_i^k \\
        \mathbf{0} \\
        \vdots \\
        \mathbf{0}
        \end{bmatrix},
    \end{equation*}
    where $\bar{q}^k = (\sum_{j \in \mathcal{S}^k} \gamma_j q_j) / (\sum_{j \in \mathcal{S}^k} \gamma_j)$ denotes the local barycenter of the sample points within $\mathcal{S}^k$.
\end{theorem}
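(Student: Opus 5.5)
Treat the transport weights $\gamma_j$ as fixed (determined in Stage A before the control solve). The local Wasserstein term is then an explicit quadratic in the predicted position, so \eqref{eqn: minimization problem} becomes an equality-constrained convex QP in the stacked states $\bar{\mathsf{x}} = [(\mathsf{x}^{k+1})^\top,\dots,(\mathsf{x}^{k+T})^\top]^\top$ and inputs $\bar u_i^k$. We then write its first-order (KKT) conditions with multipliers $\bar\lambda$ for the dynamics. Because the cost is convex ($\bar Q \succeq \mathbf{0}$, $R\succ\mathbf{0}$) and the constraints are linear and of full row rank, the KKT conditions are necessary and sufficient. Solving the linear KKT system therefore yields the optimum, and \eqref{eqn: optimal_u_2} is read off as the input block of $E^{-1}[F_1;F_2;\mathbf{0}]$.

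\textbf{Step 1: expand the Wasserstein term.} With $y=C\mathsf{x}$ and $\Gamma=\sum_{j\in\mathcal{S}^k}\gamma_j$,
\[
\sum_j \gamma_j\|C\mathsf{x}-q_j\|^2=\Gamma\,\mathsf{x}^\top C^\top C\mathsf{x}-2\Gamma\,\mathsf{x}^\top C^\top \bar q^k+\text{const},
\]
using the barycenter $\bar q^k$ from the statement. Adding the $Q$ term, each stage cost (for $\ell=k+1,\dots,k+T$) becomes $\tfrac12\mathsf{x}^\top\bar Q\mathsf{x}-\mathsf{x}^\top\Gamma C^\top\bar q^k$ plus $\tfrac12 u^\top Ru$. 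The $\ell=k$ state term does not depend on the decision variables. The stage costs and the terminal cost $\Phi$ have identical state parts, so the state Hessian is block-diagonal with $T$ copies of $\bar Q$, which is $E_{11}$ in the paper's notation. The linear term is the constant stack $F_1$.

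\textbf{Step 2: assemble the Lagrangian.} Write
\[
\mathcal{L}=\tfrac12\bar{\mathsf{x}}^\top E_{11}\bar{\mathsf{x}}-\bar{\mathsf{x}}^\top F_1+\tfrac12\bar u^\top E_{33}\bar u+\bar\lambda^\top\big(E_{21}\bar{\mathsf{x}}+E_{23}^\top\bar u-F_2\big),
\]
encoding $-\mathsf{x}^{\ell+1}+A\mathsf{x}^\ell+Bu^\ell=0$. For the first step this reads $-\mathsf{x}^{k+1}+Bu^k=-A\mathsf{x}^k_i$, which produces $F_2$. Stationarity in $\bar{\mathsf{x}}$, $\bar\lambda$ and $\bar u$ then gives
\[
E_{11}\bar{\mathsf{x}}+E_{12}\bar\lambda=F_1,\qquad E_{21}\bar{\mathsf{x}}+E_{23}\bar u=F_2,\qquad E_{32}\bar\lambda+E_{33}\bar u=0,
\]
i.e. $E[\bar{\mathsf{x}};\bar\lambda;\bar u]=[F_1;F_2;\mathbf{0}]$.

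\textbf{Step 3: invert and read off the control.} Since $R\succ 0$ and $E_{12}$ (block bidiagonal with $-\mathbf{I}_n$ on the diagonal) is invertible, $E$ is nonsingular by a standard saddle-point argument. Then $\bar u = \mathcal{E}_{31}F_1+\mathcal{E}_{32}F_2$. The matrix $E$ is symmetric, so $\mathcal{E}_{31}=\mathcal{E}_{13}^\top$ and $\mathcal{E}_{32}=\mathcal{E}_{23}^\top$, which gives \eqref{eqn: optimal_u_2}.

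\textbf{Main obstacle: index and sign bookkeeping.} The hard part is matching the paper's block conventions. The placement of $A^\top$ above the diagonal in $E_{12}$ must be reconciled with the ordering of multipliers, and this may require reindexing $\bar\lambda$ or flipping its sign. One must also check the Kronecker-order convention behind $\bar Q\otimes\mathbf{I}_T$ and $B\otimes\mathbf{I}_T$ (read as $\mathbf{I}_T\otimes\cdot$, block-diagonal). I would fix the ordering of $\bar\lambda$ first so that $E_{12}$ comes out exactly as displayed, and then verify that $F_2$ has the claimed sign.
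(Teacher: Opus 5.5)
Your proposal is correct and takes the same route the paper relies on. The paper cites this result without proof, but recalls its structure in the proof of Theorem~\ref{thm:full_reduction}: write the KKT system $Ez=[F_1^\top,F_2^\top,\mathbf{0}^\top]^\top$ with $z=[\bar{\mathsf{x}}_i^\top,\bar\lambda_i^\top,(\bar u_i^k)^\top]^\top$, then read the input block of $E^{-1}F$ off using the symmetry of $E$. The bookkeeping you flag is already settled by your own encoding $-\mathsf{x}^{\ell+1}+A\mathsf{x}^\ell+Bu^\ell=0$: its state-coefficient matrix is lower block-bidiagonal with $-\mathbf{I}_n$ and $A$, which is exactly $E_{12}^\top$, so no reindexing or sign flip is needed. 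Separately, the term $E_{23}^\top\bar u$ in your Lagrangian is a typo for $E_{23}\bar u$.
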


\begin{remark}
    By utilizing the analytical block-inversion of the KKT matrix $E$, the D$^2$OC scheme avoids numerical instabilities associated with indefinite saddle-point problems, enabling real-time decentralized control. However, as the horizon $T$ increases, the dimension of the KKT system expands, naturally increasing the computational burden for direct matrix inversion.
\end{remark}
\section{Computational Efficiency Improvement via Multi-Step Reference Structural Reduction}

\subsection{From Fixed Target to Multi-Stage Prediction}
In our previous work \cite{seo2025smcs}, the optimal control utilized a stationary reference barycenter $\bar{q}^k$ fixed at time $k$. However, as an agent's movement progressively reduces nearby sample weights (Stage B), the local target barycenter systematically shifts even under a static global distribution. To anticipate this dynamic drift, we propose a multi-stage MPC framework incorporating a sequence of predicted barycenters $\{\bar{q}^{k+\ell+1}\}_{\ell=0}^{T-1}$, allowing the full $T$-horizon trajectory to align with the evolving local objectives inherent in the coverage process.

\begin{definition}[Iterative Sample Set and Weight Propagation]
For each prediction step $\ell \in \{0, \dots, T-1\}$, the local sample set $\mathcal{S}^{k+\ell}$ is collected around the predicted agent position. Consequently, the time-varying state penalty matrix $\bar{Q}^{k+\ell}$ and target barycenter $\bar{q}^{k+\ell+1}$ are recursively defined as:
\begin{equation}
    \bar{Q}^{k+\ell} = \Big(\sum_{j \in \mathcal{S}^{k+\ell}} \gamma_j\Big) C^\top C + Q, \quad \bar{q}^{k+\ell+1} = \frac{\sum_{j \in \mathcal{S}^{k+\ell}} \gamma_j q_j}{\sum_{j \in \mathcal{S}^{k+\ell}} \gamma_j}.
\end{equation}
\end{definition}

\subsection{Analytical Condensation to $m$-Dimensional Quadratic Form}
Although incorporating multi-stage goals increases the theoretical complexity, we prove that the high-dimensional $T$-horizon problem originally formulated over the stacked input sequence $\bar{u}_i^k \in \mathbb{R}^{mT}$ in Theorem~\ref{Theorem: LTI D2OC Optimal control} can be exactly reduced to an efficient $m$-dimensional QP in terms of the actual control input $u_i^k \in \mathbb{R}^m$. By incorporating $T$-step preview information into the condensed objective function, the agent can strategically pursue coverage progress while maintaining a computational complexity that scales only linearly with the horizon $T$. This ensures that the online burden remains comparable to that of short-horizon controllers, enabling high-fidelity predictive control in real-time.

\begin{theorem}[Analytical Reduction of Time-Varying MPC]
\label{thm:full_reduction}
The optimal first-step control $u_i^k$ for the $T$-horizon problem \eqref{eqn: minimization problem} is the unique solution to the condensed QP:
\begin{equation}
\label{eq:condensed_qp_final}
\begin{aligned}
    \min_{u_i^k} \quad & \frac{1}{2} (u_i^k)^\top \mathbf{H}_T u_i^k + \mathbf{g}_T^\top u_i^k \quad
    \text{ s.t.} \quad  u_{\min} \le u_i^k \le u_{\max}
\end{aligned}
\end{equation}
where $\mathbf{H}_T \in \mathbb{R}^{m \times m}$ and $\mathbf{g}_T \in \mathbb{R}^m$ are analytically given by:
\vspace{-.2in}
\begin{align}
    \mathbf{H}_T &= R + \sum_{\ell=0}^{T-1} B^\top (A^\ell)^\top \bar{Q}^{k+\ell} A^\ell B, \\
    \mathbf{g}_T &= \sum_{\ell=0}^{T-1} B^\top (A^\ell)^\top \bar{Q}^{k+\ell} (A^{\ell+1}\mathsf{x}_i^k - \bar{q}^{k+\ell+1}).
\end{align}
\end{theorem}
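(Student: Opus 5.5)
The plan is to avoid the $T$-horizon KKT system of Theorem~\ref{Theorem: LTI D2OC Optimal control} altogether. Instead I would write the predicted trajectory as an explicit affine function of the single decision variable $u_i^k$, substitute it into the time-varying cost, and read off the Hessian and gradient. The form of $\mathbf{H}_T$, which carries no cross terms between different inputs, shows that the reduction uses the prediction model $\mathsf{x}_i^{k+\ell+1} = A^{\ell+1}\mathsf{x}_i^k + A^{\ell}B u_i^k$ for $\ell=0,\dots,T-1$. This is the trajectory generated by applying $u_i^k$ and then propagating the free response over the preview window. Making this prediction model explicit is the first step, and the statement is exact relative to it.

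The second step rewrites each stage cost as a quadratic in the predicted state. With $\mathcal{S}^{k+\ell}$ and the weights $\gamma_j$ frozen at their propagated values, I would use the standard barycentric identity
\begin{equation*}
\sum_{j\in\mathcal{S}^{k+\ell}}\gamma_j\|C\mathsf{x}-q_j\|^2=\Big(\sum_{j\in\mathcal{S}^{k+\ell}}\gamma_j\Big)\|C\mathsf{x}-\bar q^{k+\ell+1}\|^2+\text{const}.
\end{equation*}
Adding $\tfrac12\mathsf{x}^\top Q\mathsf{x}$ then gives $\tfrac12\mathsf{x}^\top\bar{Q}^{k+\ell}\mathsf{x}-\mathsf{x}^\top\big(\sum_j\gamma_j\big)C^\top\bar q^{k+\ell+1}+\text{const}$.

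In the third step I would substitute $\mathsf{x}=A^{\ell+1}\mathsf{x}_i^k+A^\ell Bu_i^k$ and expand. The quadratic part in $u_i^k$ is $\tfrac12 (u_i^k)^\top B^\top(A^\ell)^\top\bar{Q}^{k+\ell}A^\ell B\,u_i^k$. The linear part is $(u_i^k)^\top B^\top(A^\ell)^\top\big(\bar{Q}^{k+\ell}A^{\ell+1}\mathsf{x}_i^k-(\sum_j\gamma_j)C^\top\bar q^{k+\ell+1}\big)$. Summing over $\ell$ and adding $\tfrac12 (u_i^k)^\top R u_i^k$ yields $\mathbf{H}_T$ and $\mathbf{g}_T$. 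Here I must identify the shorthand $\bar{Q}^{k+\ell}(A^{\ell+1}\mathsf{x}_i^k-\bar q^{k+\ell+1})$ in the statement with this expression. That means interpreting $\bar q^{k+\ell+1}$ as its state-space lift, i.e.\ any vector $\tilde q$ with $\bar{Q}^{k+\ell}\tilde q=(\sum_j\gamma_j)C^\top\bar q^{k+\ell+1}$, exactly as in $F_1$ of Theorem~\ref{Theorem: LTI D2OC Optimal control}. I would state this convention explicitly.

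Uniqueness is routine. Since $\bar{Q}^{k+\ell}\succeq\mathbf{0}$ and $R\succ\mathbf{0}$, we have $\mathbf{H}_T\succeq R\succ\mathbf{0}$, so the objective is strictly convex on the nonempty, convex, compact box, and a unique minimizer exists. For the complexity claim I would note that $A^\ell B$ and $A^{\ell+1}\mathsf{x}_i^k$ can be propagated recursively at $O(1)$ cost per step in $T$, so forming $\mathbf{H}_T$ and $\mathbf{g}_T$ is $O(T)$ and the remaining QP is $m$-dimensional.

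The main obstacle is the word ``exactly'': relating this condensed QP to the full problem \eqref{eqn: minimization problem}, in which $u_i^{k+1},\dots,u_i^{k+T-1}$ are also free. My first attempt would be a Schur complement of the KKT matrix $E$ with respect to the later inputs and states. In general, however, that elimination produces Riccati-type coupling terms that do not appear in $\mathbf{H}_T$. I therefore expect the exactness to hold under the move-blocking or receding-horizon prediction model described above, not for the unrestricted problem. I would frame the theorem's proof around that model and remark that its first-step solution coincides with the full solution only when the later inputs are constrained in this way.
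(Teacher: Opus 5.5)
Your derivation is correct and gives exactly the statement's $\mathbf{H}_T$ and $\mathbf{g}_T$, including your lifted reading of $\bar q^{k+\ell+1}$. The paper's proof uses the same convention when it writes $F_1=[\bar Q^{k+1}\bar q^{k+1},\dots]$, up to an index shift. The paper reaches these formulas differently. It eliminates states and costates from $Ez=F$ to get the dense system $\mathcal{H}\bar u_i^k=\mathcal{G}$. It then reads off $\mathbf{H}_T=\mathcal{H}_{11}$ and $\mathbf{g}_T=-\mathcal{G}_1$, using the first block row $[-B^\top,\dots,-B^\top(A^{T-1})^\top]$ of $E_{23}^\top E_{12}^{-1}$. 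But $\mathcal{H}_{11}$ and $\mathcal{G}_1$ are just the Hessian and gradient of the condensed cost in $u_i^k$ with $u_i^{k+1},\dots,u_i^{k+T-1}$ held at zero. So the two routes compute the same object: yours makes the move-blocking explicit, while the paper's hides it inside the block extraction.

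The obstacle you flag is genuine, and the gap is in the paper's argument, not in your plan. The first block of $\mathcal{H}^{-1}\mathcal{G}$ is not $\mathcal{H}_{11}^{-1}\mathcal{G}_1$. Eliminating the later inputs requires the Schur complement $\mathcal{H}_{11}-\mathcal{H}_{1r}\mathcal{H}_{rr}^{-1}\mathcal{H}_{r1}$, with the matching correction to $\mathcal{G}_1$, where $r$ indexes $u_i^{k+1},\dots,u_i^{k+T-1}$. The coupling blocks $\mathcal{H}_{1p}=\sum_{\ell\ge p}B^\top(A^\ell)^\top\bar Q^{k+\ell}A^{\ell-p}B$ are nonzero in general, and the paper's proof skips this step.

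A scalar instance shows the statement fails as written. Take $A=B=C=1$, $Q=0$, $R=1$, $T=2$, a single sample point $q=1$ with $\gamma=1$, and $\mathsf{x}_i^k=0$, so that $\bar Q^{k}=\bar Q^{k+1}=1$ and $\bar q^{k+1}=\bar q^{k+2}=1$. Then $\mathcal{H}_{11}=3$, $\mathcal{H}_{12}=\mathcal{H}_{21}=1$, $\mathcal{H}_{22}=2$ and $\mathcal{G}=[2,\,1]^\top$, so the true first input is $3/5$. The condensed QP has $\mathbf{H}_T=3$, $\mathbf{g}_T=-2$ and returns $2/3$, with the box $u_{\min}=-1$, $u_{\max}=1$ inactive in both cases.

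So restating the theorem as exact for the move-blocked problem is the right repair. Include the box in that restricted problem, since \eqref{eqn: minimization problem} itself has no input bounds. If you want exactness for the unrestricted problem, the Schur complement comes from a backward Riccati recursion through the time-varying $\bar Q^{k+\ell}$. The first-step Hessian becomes $R+B^\top\Pi B$, where $\Pi$ is the Hessian of the optimal cost-to-go from $\mathsf{x}_i^{k+1}$ (stage weight included). This still costs time linear in $T$, and it is exact when the box is imposed on $u_i^k$ alone.
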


\begin{proof}
The optimal control law in \eqref{eqn: optimal_u_2} is derived from the linear optimality conditions, which can be expressed as a KKT system of the form $Ez=F$. Here, $z = [\bar{\mathsf{x}}_i^\top, \bar{\lambda}_i^\top, \bar{u}_i^\top]^\top$ is the vector of decision variables consisting of the sequences of states, co-states (Lagrange multipliers), and control inputs over the prediction horizon $T$. The KKT matrix $E$ encapsulates the coupled dynamics and optimality constraints, while the right-hand side vector $F = [F_1^{\top}, F_2^{\top}, \mathbf{0}^{\top}]^{\top}$ incorporates the initial state and the time-indexed sequence of local barycenters $\{\bar{q}^{k+\ell+1}\}$.

By exploiting the specific block-sparse structure of $E$, one can obtain $\bar{\mathsf{x}}_i = E_{12}^{-\top}(F_2 - E_{23}\bar{u}_i^k)$, $\bar{\lambda}_i = E_{12}^{-1}(F_1 - E_{11}E_{12}^{-\top}(F_2 - E_{23}\bar{u}_i^k))$. Finally, substituting $\bar{\lambda}_i$ into the third row $E_{23}^\top \bar{\lambda}_i + E_{33}\bar{u}_i^k = \mathbf{0}$ yields the condensed system $\mathcal{H} \bar{u}_i^k = \mathcal{G}$, where:
\begin{align}
    \mathcal{H} &= E_{33} + E_{23}^\top E_{12}^{-1} E_{11} E_{12}^{-\top} E_{23}, \\
    \mathcal{G} &= E_{23}^\top E_{12}^{-1} (E_{11} E_{12}^{-\top} F_2 - F_1). \label{eq:schur_G}
\end{align}
Note that $E_{12}^{-1}$ is an upper block-triangular operator representing the adjoint state transition, which allows for the analytical reduction of the KKT system into a dense QP problem.

The control influence is captured by the block vector $\mathcal{K} = [ \mathcal{K}_0^\top, \dots, \mathcal{K}_{T-1}^\top ]^\top$, where each block is defined as $\mathcal{K}_\ell = -A^\ell B$.
The condensed Hessian $\mathbf{H}_T$, which is the $(1,1)$-block of $\mathcal{H}$, is then derived using $E_{11} = \text{diag}(\bar{Q}^{k}, \dots, \bar{Q}^{k+T-1})$:
\begin{equation}
    \mathbf{H}_T = R + \sum_{\ell=0}^{T-1} \mathcal{K}_\ell^\top \bar{Q}^{k+\ell} \mathcal{K}_\ell = R + \sum_{\ell=0}^{T-1} B^\top (A^\ell)^\top \bar{Q}^{k+\ell} A^\ell B.
\end{equation}
For the linear term $\mathbf{g}_T$, we extract the first $m$-dimensional block of $\mathcal{G}$ defined in \eqref{eq:schur_G}. By observing the block-structure of $E_{23}^\top E_{12}^{-1}$, the first block-row is analytically given by $[-B^\top, -B^\top A^\top, \dots, -B^\top (A^{T-1})^\top]$. Substituting the initial condition $F_2 = [-A\mathsf{x}_i^k, \mathbf{0}, \dots]^\top$ and the target-dependent vector $F_1 = [ \bar{Q}^{k+1} \bar{q}^{k+1}, \dots, \bar{Q}^{k+T} \bar{q}^{k+T} ]^\top$, the first block $\mathbf{g}_T$ is evaluated as:
\begin{equation}
    \mathbf{g}_T = \sum_{\ell=0}^{T-1} (A^\ell B)^\top \bar{Q}^{k+\ell} ( A^{\ell+1} \mathsf{x}_i^k - \bar{q}^{k+\ell+1} )
\end{equation}
Note that for $\ell=0$, the term $B^\top \bar{Q}^k (A\mathsf{x}_i^k - \bar{q}^{k+1})$ correctly incorporates the initial state transition. This confirms the exact analytical reduction to an $m$-dimensional QP.
\end{proof}

\subsection{Computational Efficiency Analysis}

\begin{proposition}[Computational Efficiency of Reduced KKT]
\label{prop:computational_efficiency}
The structural reduction in Theorem~\ref{thm:full_reduction} achieves a complexity transformation from cubic to linear growth with respect to the horizon $T$ as follows:
\begin{enumerate}
    \item Full KKT Approach: Direct factorization of the $(2n+m)T$-dimensional system requires $\mathcal{O}(T^3 (2n+m)^3)$ operations, scaling cubically with the horizon.
    \item Proposed Reduced KKT: By utilizing the recursive construction of $\mathbf{H}_T$ and $\mathbf{g}_T$, the online cost is reduced to $\mathcal{O}(T n^2 m)$ for matrix updates and $\mathcal{O}(m^3)$ for solving the $m$-dimensional QP.
    \item Acceleration Factor: For $T \gg 1$, the computational speedup ratio relative to the full KKT approach is:
    \begin{equation}
        \mathcal{R} = \frac{\text{Cost}_{\rm KKT}}{\text{Cost}_{\rm Proposed}} \approx \frac{(2n+m)^3 T^3}{T n^2 m + m^3} \approx \mathcal{O}(T^2).
    \end{equation}
\end{enumerate}
\end{proposition}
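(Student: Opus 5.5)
The plan is a direct operation count for the two pipelines, carried out separately for items 1 and 2; item 3 then follows by taking their ratio.

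\textbf{Item 1 (full KKT).} From Theorem~\ref{Theorem: LTI D2OC Optimal control}, the unknown $z=[\bar{\mathsf{x}}_i^\top,\bar{\lambda}_i^\top,\bar{u}_i^\top]^\top$ stacks $nT$ states, $nT$ co-states and $mT$ inputs. The matrix $E$ is therefore square of dimension $D=(2n+m)T$. A generic dense factorization (LU, or $LDL^\top$ since $E$ is symmetric indefinite) costs $\Theta(D^3)$ flops. Forming $E^{-1}$ explicitly, or extracting the blocks $\mathcal{E}_{13},\mathcal{E}_{23}$ used in \eqref{eqn: optimal_u_2}, costs the same order. This gives $\mathcal{O}(T^3(2n+m)^3)$, and the cubic dependence on $T$ is immediate.

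\textbf{Item 2 (reduced KKT).} I will count the cost of evaluating the closed forms of $\mathbf{H}_T$ and $\mathbf{g}_T$ in Theorem~\ref{thm:full_reduction} by a forward recursion. Set $\Gamma_0=B$, $\Gamma_{\ell+1}=A\Gamma_\ell$ (each $n\times m$), and $p_0=A\mathsf{x}_i^k$, $p_{\ell+1}=Ap_\ell$. At step $\ell$ the work is as follows:
\begin{itemize}
\item updating $\Gamma_\ell$ costs $\mathcal{O}(n^2m)$;
\item forming $\bar{Q}^{k+\ell}\Gamma_\ell$ costs $\mathcal{O}(n^2m)$;
\item forming $\Gamma_\ell^\top(\bar{Q}^{k+\ell}\Gamma_\ell)$ costs $\mathcal{O}(nm^2)$;
\item computing the vector term $\Gamma_\ell^\top\bar{Q}^{k+\ell}(p_\ell-\bar q^{k+\ell+1})$ costs $\mathcal{O}(n^2+nm)$.
\end{itemize}
Under the natural assumption $m\le n$, each step is $\mathcal{O}(n^2m)$. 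Summing over $\ell=0,\dots,T-1$ gives $\mathcal{O}(Tn^2m)$, which is linear in $T$.

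Assembling $\bar{Q}^{k+\ell}$ involves only a scalar weight sum times the fixed matrix $C^\top C$. Its cost, $\mathcal{O}(|\mathcal{S}^{k+\ell}|+n^2)$, is absorbed into the per-step count. The remaining work is solving the $m$-dimensional box-constrained QP \eqref{eq:condensed_qp_final}. Any standard active-set or interior-point method performs a bounded number of $m\times m$ factorizations, so this costs $\mathcal{O}(m^3)$, independent of $T$.

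\textbf{Item 3 (speedup).} Divide the two costs to obtain $\mathcal{R}\approx (2n+m)^3T^3/(Tn^2m+m^3)$. For $T\gg 1$ with $n,m$ fixed, the denominator is dominated by $Tn^2m$. The ratio therefore behaves like $\frac{(2n+m)^3}{n^2m}\,T^2=\mathcal{O}(T^2)$.

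\textbf{Main obstacle.} The only delicate point is making item 2 honest. The phrase ``online cost'' must be tied to the recursion above, rather than to forming the dense $mT\times mT$ condensed matrix $\mathcal{H}$ or the powers $A^\ell$ separately, either of which would reintroduce superlinear terms. I will also state explicitly the assumption behind the QP cost: the solver iteration count is independent of $T$, which holds because the QP dimension is $m$. Both the sample-set search for $\mathcal{S}^{k+\ell}$ and the barycenter evaluation are per-step costs independent of $T$, so they do not alter the linear scaling.
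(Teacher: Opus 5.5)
Your proposal is correct and follows essentially the same argument as the paper. Both use a dense $\mathcal{O}(((2n+m)T)^3)$ count for factoring $E$, a $T$-step forward recursion (via $A^{\ell+1}=A\cdot A^\ell$) that builds $\mathbf{H}_T,\mathbf{g}_T$ at $\mathcal{O}(n^2m)$ per step, an $\mathcal{O}(m^3)$ QP solve, and then take the ratio to get $\mathcal{O}(T^2)$. Your bookkeeping is in fact slightly tighter than the paper's: the paper propagates $(A^\ell)^\top\bar Q^{k+\ell}$, which costs $\mathcal{O}(n^3)$ per step, whereas you propagate only the $n\times m$ blocks $A^\ell B$; you also state the assumptions $m\le n$ and a $T$-independent QP iteration count explicitly.
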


\begin{proof}
The conventional KKT cost $\mathcal{C}_{\text{KKT}}$ depends on the dimension of the stacked vector $z \in \mathbb{R}^{(2n+m)T}$, leading to $\mathcal{C}_{\text{KKT}} \approx \mathcal{O}(((2n+m)T)^3)$. In the proposed method, we exploit the iterative relation $A^{\ell+1} = A \cdot A^\ell$ to compute the sequences $\{A^\ell B\}$ and $\{ (A^\ell)^\top \bar{Q} \}$. This recursive preparation allows $\mathbf{H}_T$ and $\mathbf{g}_T$ to be constructed in $T$ steps, each involving matrix-matrix products of order $\mathcal{O}(n^2 m)$. The total cost $\mathcal{C}_{\text{prop}}$ is then:
\begin{align}
    \mathcal{C}_{\text{prop}} &= \underbrace{\sum_{\ell=0}^{T-1} \mathcal{O}(n^2 m)}_{\text{Recursive Updates}} + \underbrace{\mathcal{O}(m^3)}_{\text{QP Solver}} 
    = \mathcal{O}(T \cdot n^2 m) + \mathcal{O}(m^3).
\end{align}
As $T$ increases, the cubic term $\mathcal{O}(T^3)$ in the KKT approach is dominated by the linear term $\mathcal{O}(T)$ in the proposed method, yielding an asymptotic acceleration of $\mathcal{O}(T^2)$. This ensures real-time feasibility for large-scale systems with long-horizon preview.
\end{proof}
\section{Contractive Lyapunov Constraint for Receding-Horizon D$^2$OC}
\label{sec:contractive_mpc}

In the previous section, we showed that the multi-goal D$^2$OC can be analytically reduced to an $m$-dimensional QP, achieving $O(T)$ computational complexity. However, since the reference barycenters $\{\bar{q}^{k+\ell}\}$ are dynamically updated via iterative sample selection and the spatial decay of density weights, the closed-loop stability cannot be directly inferred from standard infinite-horizon analysis. In this section, we introduce a contractive Lyapunov constraint designed as a Linear Matrix Inequality (LMI). This approach enforces solver-level stability at each MPC update, rigorously distinguishing the proposed multi-step scheme from a purely reactive horizon-1 QP.

\subsection{LMI-Based Stability Design via Schur Complement}

To analyze the tracking performance, we define the time-varying tracking error as $e^k := \mathsf{x}_i^k - \bar{q}^k$. Based on the discrete-time LTI dynamics \eqref{eqn: LTI system}, the error evolution is governed by:
\begin{equation}
e^{k+1} = A e^k + B u_i^k + d^k,
\label{eq:error_dynamics}
\end{equation}
where $d^k := A \bar{q}^k - \bar{q}^{k+1}$ represents the reference propagation drift. To guarantee the stability of the error dynamics, we employ a quadratic Lyapunov candidate $V(e^k) := (e^{k})^\top P e^k$ with a symmetric positive definite matrix $P \succ 0$. 

The core of our stability design is to enforce a strict contraction condition, $V(e^{k+1}) - V(e^k) \le - (e^{k})^\top Q_c e^k$, where $Q_c \succ 0$ determines the convergence rate. To incorporate this requirement into a convex optimization framework, we first formulate it as an LMI.

\begin{proposition}[Schur-Complement Contractive Constraint]
\label{prop:schur_contract}
The error dynamics \eqref{eq:error_dynamics} satisfies the contraction condition $V(e^{k+1}) - V(e^k) \le - (e^{k})^\top Q_c e^k$ if and only if the control input $u_i^k$ satisfies the following LMI:
\begin{equation}
\begin{bmatrix}
P - Q_c & (A e^k + B u_i^k + d^k)^\top P \\
P (A e^k + B u_i^k + d^k) & P
\end{bmatrix}
\succeq 0 .
\label{eq:schur_constraint}
\end{equation}
\end{proposition}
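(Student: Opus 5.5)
Let $\xi := A e^k + B u_i^k + d^k$, so that by \eqref{eq:error_dynamics} we have $e^{k+1}=\xi$. The plan is to apply the Schur complement lemma to the block matrix in \eqref{eq:schur_constraint}, using its $(2,2)$ block $P \succ 0$ as the pivot. The block matrix is $\begin{bmatrix} P-Q_c & \xi^\top P \\ P\xi & P\end{bmatrix}$. Because $P\succ 0$, this matrix is positive semidefinite if and only if its Schur complement is, namely
\begin{equation*}
(P-Q_c) - \xi^\top P \, P^{-1} P \xi = P - Q_c - \xi^\top P \xi \succeq 0 .
\end{equation*}
This gives the equivalence at the matrix level. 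The remaining work, as I expect, is reconciling dimensions and translating this condition into the scalar inequality $V(e^{k+1})-V(e^k)\le -(e^k)^\top Q_c e^k$.

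That translation is the main obstacle. As literally written, the off-diagonal block $P\xi$ is $n\times 1$, while the diagonal blocks $P$ and $P-Q_c$ are $n\times n$. So the LMI is dimensionally consistent only if the top-left block is read as the scalar $(e^k)^\top(P-Q_c)e^k$. Under that reading the matrix is $(1+n)\times(1+n)$, and the Schur complement condition becomes
\begin{equation*}
(e^k)^\top (P-Q_c) e^k - (e^{k+1})^\top P e^{k+1} \ge 0 ,
\end{equation*}
which rearranges exactly to $V(e^{k+1}) - V(e^k) \le -(e^k)^\top Q_c e^k$. I would state this interpretation explicitly at the start of the proof, as the only one consistent with the block sizes, and then run both directions through the Schur complement lemma.

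For the two directions: "if" follows because positive semidefiniteness of the block matrix, together with $P\succ0$, implies the scalar Schur complement is nonnegative. "Only if" follows because a nonnegative Schur complement plus $P\succ 0$ implies positive semidefiniteness, via the congruence
\begin{equation*}
\begin{bmatrix}1 & -\xi^\top \\ 0 & I_n\end{bmatrix}\begin{bmatrix} s & \xi^\top P\\ P\xi & P\end{bmatrix}\begin{bmatrix}1 & 0\\ -\xi & I_n\end{bmatrix} = \begin{bmatrix} s-\xi^\top P\xi & 0\\ 0 & P\end{bmatrix},
\end{equation*}
which I would verify directly rather than cite. Finally, I would note that \eqref{eq:schur_constraint} is affine in $u_i^k$, since $\xi$ is affine in $u_i^k$ and enters only the off-diagonal blocks. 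It is therefore a genuine LMI in the decision variable and can be appended to the condensed QP of Theorem~\ref{thm:full_reduction}.
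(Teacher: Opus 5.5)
Your proof is correct and follows the same route as the paper: the Schur complement of the block matrix with pivot $P \succ 0$, with your explicit congruence serving in place of citing the lemma. Reading the top-left block as the scalar $(e^k)^\top (P-Q_c) e^k$ is exactly what the paper's proof does implicitly when it jumps from the ill-typed $(P-Q_c)-\xi^\top P\xi\succeq 0$ to $\xi^\top P \xi \le (e^k)^\top(P-Q_c)e^k$, so state that reading up front and delete your first display, which repeats the same matrix-minus-scalar expression.
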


\begin{proof}
Consider a symmetric block matrix \\$M = \begin{bmatrix} X & Y^\top \\ Y & Z \end{bmatrix}$. According to the Schur complement lemma, $M \succeq 0$ if and only if $Z \succ 0$ and $X - Y^\top Z^{-1} Y \succeq 0$. By mapping the terms in \eqref{eq:schur_constraint} such that $X = P - Q_c$, $Y = P(A e^k + B u_i^k + d^k)$, and $Z = P$, the condition $X - Y^\top Z^{-1} Y \succeq 0$ becomes:
\begin{equation*}
(P - Q_c) - (A e^k + B u_i^k + d^k)^\top P^\top P^{-1} P (A e^k + B u_i^k + d^k) \succeq 0.
\end{equation*}
Since $P$ is symmetric ($P^\top = P$), the term simplifies to $P P^{-1} P = P$. Substituting this and rearranging the inequality, we obtain:
\begin{equation*}
(A e^k + B u_i^k + d^k)^\top P (A e^k + B u_i^k + d^k) \le (e^{k})^\top (P - Q_c) e^k.
\end{equation*}
Noting that $e^{k+1} = A e^k + B u_i^k + d^k$ and $V(e^{k+1}) = (e^{k+1})^\top P e^{k+1}$, the above is equivalent to:
$V(e^{k+1}) \le (e^{k})^\top P e^k - (e^{k})^\top Q_c e^k,
$
which yields $V(e^{k+1}) - V(e^k) \le - (e^{k})^\top Q_c e^k$.
\end{proof}

\subsection{Numerical Reformulation for Real-Time Execution}

While Proposition \ref{prop:schur_contract} provides a robust theoretical foundation for stability via the LMI \eqref{eq:schur_constraint}, solving a Semidefinite Program (SDP) in real-time is computationally demanding for high-speed multi-agent coordination. To ensure numerical tractability, we leverage the Cholesky factorization of the Lyapunov matrix, $P = L^\top L$, to transform the matrix inequality into a Second-Order Cone (SOC) constraint.

By utilizing the properties of the Euclidean norm, the quadratic form $(e^{k+1})^\top P e^{k+1}$ is expressed as $\| L e^{k+1} \|_2^2$. Similarly, the upper bound $(e^{k})^\top (P - Q_c) e^k$ can be rewritten using the relation $P - Q_c = P(I - P^{-1}Q_c)$. Taking the square root of both sides, the contraction condition is reformulated as the following SOC constraint:
\begin{equation}
    \| L (A e^k + B u_i^k + d^k) \|_2 \le \| L \sqrt{I - P^{-1}Q_c} \, e^k \|_2 .
    \label{eq:soc_derivation}
\end{equation}
This reformulation allows the stability-guaranteed controller to be implemented as a Quadratic Constrained Quadratic Program (QCQP), which is significantly faster to solve than general SDPs.

\subsection{QCQP Formulation}

Integrating the $T$-horizon condensed parameters $\mathbf{H}_T$ and $\mathbf{g}_T$ from Theorem \ref{thm:full_reduction}, the optimal control input $u_i^k$ is obtained by solving the following convex QCQP:
\begin{subequations}
\label{eq:qcqp_formulation}
\begin{align}
\min_{u_i^k} & \quad \frac{1}{2} u_i^{k\top} \mathbf{H}_T u_i^k + \mathbf{g}_T^\top u_i^k \\
\text{s.t.} & \quad \| L (A e^k + B u_i^k + d^k) \|_2 \le \eta^k, \label{eq:socp_constraint} \\
& \quad u_\mathrm{min} \le u_i^k \le u_\mathrm{max},
\end{align}
\end{subequations}
where $\eta^k := \| L \sqrt{I - P^{-1}Q_c} \, e^k \|_2$ is a constant scalar at each step $k$. The SOC constraint \eqref{eq:socp_constraint} ensures efficient numerical resolution while maintaining the recursive stability proven via the LMI framework.

\subsection{Closed-Loop Stability Analysis}

\begin{theorem}[Input-to-State Stability of Condensed D$^2$OC]
\label{thm:mpc_stability}
Assume the QCQP \eqref{eq:qcqp_formulation} is feasible at each step $k$. Then, the tracking error $e^k$ is Input-to-State Stable (ISS) with respect to the reference drift $d^k$.
\end{theorem}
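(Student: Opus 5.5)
The plan is to verify the discrete-time ISS-Lyapunov characterization of Jiang and Wang for the error dynamics \eqref{eq:error_dynamics}, using the quadratic candidate $V(e)=e^\top P e$ already introduced. Concretely, I will show that there are class-$\mathcal{K}_\infty$ functions $\alpha_1,\alpha_2,\alpha_3$ and a class-$\mathcal{K}$ function $\sigma$ such that
\begin{equation*}
\alpha_1(\|e\|)\le V(e)\le \alpha_2(\|e\|), \qquad V(e^{k+1})-V(e^k)\le -\alpha_3(\|e^k\|)+\sigma(\|d^k\|).
\end{equation*}
The sandwich bounds are immediate from $P\succ 0$: take $\alpha_1(r)=\lambda_{\min}(P)r^2$ and $\alpha_2(r)=\lambda_{\max}(P)r^2$. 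For $\eta^k$ to be well defined we need $P-Q_c\succeq 0$, i.e.\ $Q_c\preceq P$, and I will state this standing assumption explicitly.

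\textbf{Decrease condition.} At every $k$ the QCQP \eqref{eq:qcqp_formulation} is assumed feasible, so the applied input $u_i^k$ satisfies the SOC constraint \eqref{eq:socp_constraint}. Squaring both sides of \eqref{eq:socp_constraint} and using $P=L^\top L$ gives
\begin{equation*}
(e^{k+1})^\top P e^{k+1}\le (e^k)^\top (P-Q_c)e^k .
\end{equation*}
By Proposition~\ref{prop:schur_contract}, this is the same as the contraction $V(e^{k+1})-V(e^k)\le -(e^k)^\top Q_c e^k\le -\lambda_{\min}(Q_c)\|e^k\|^2$. One point must be checked here: the right-hand side of \eqref{eq:socp_derivation} should equal $\sqrt{(e^k)^\top(P-Q_c)e^k}$. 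I will confirm this algebra, or simply define $\eta^k:=\sqrt{(e^k)^\top(P-Q_c)e^k}$, since the matrix square-root notation is only shorthand for that quantity. The decrease inequality then holds with $\alpha_3(r)=\lambda_{\min}(Q_c)r^2$ and $\sigma\equiv 0$, which is trivially of the required form.

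\textbf{Explicit bound.} With $\rho:=1-\lambda_{\min}(Q_c)/\lambda_{\max}(P)\in[0,1)$, iterating the decrease gives $V(e^{k})\le\rho^{k}V(e^0)$. Hence
\begin{equation*}
\|e^k\|\le\sqrt{\lambda_{\max}(P)/\lambda_{\min}(P)}\,\rho^{k/2}\|e^0\| ,
\end{equation*}
which is a $\mathcal{KL}$ bound. So the closed loop is ISS with respect to $d^k$, and in fact exponentially stable along feasible trajectories.

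\textbf{Main obstacle.} The difficulty is conceptual rather than technical: $d^k$ sits inside the constraint, so feasibility already absorbs the drift, and the argument above yields a zero gain $\sigma\equiv 0$ that hides the real dependence on $d^k$. To make the ISS claim meaningful, I will add a robust variant. Impose the contraction only on the nominal successor $\hat e^{k+1}=Ae^k+Bu_i^k$, that is $\|L\hat e^{k+1}\|\le\eta^k$. Write $e^{k+1}=\hat e^{k+1}+d^k$ and apply Young's inequality with a parameter $\epsilon>0$:
\begin{equation*}
V(e^{k+1})\le(1+\epsilon)\hat V+(1+\epsilon^{-1})\lambda_{\max}(P)\|d^k\|^2, \qquad \hat V:=(\hat e^{k+1})^\top P\hat e^{k+1}.
\end{equation*}
Combining this with $\hat V\le (e^k)^\top(P-Q_c)e^k$ gives
\begin{equation*}
V(e^{k+1})-V(e^k)\le -\big(\lambda_{\min}(Q_c)-\epsilon\lambda_{\max}(P)\big)\|e^k\|^2+(1+\epsilon^{-1})\lambda_{\max}(P)\|d^k\|^2 .
\end{equation*}
Choosing $\epsilon<\lambda_{\min}(Q_c)/\lambda_{\max}(P)$ makes the first coefficient positive, which gives the standard ISS gain $\sigma(r)=c\,r^2$. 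The ISS estimate then follows by the usual comparison-lemma argument. Choosing this $\epsilon$ is the one step that needs care.
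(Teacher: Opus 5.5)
Your proof is correct and follows the paper's skeleton: the quadratic $V$ with sandwich bounds, the contraction $V(e^{k+1})\le(1-\lambda)V(e^k)$ obtained from feasibility of \eqref{eq:socp_constraint}, geometric iteration, and a final square root. Where you differ is in how the drift is handled.

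After deriving the drift-free contraction, the paper inserts a perturbation term $\sum_j(1-\lambda)^{k-j-1}\delta(d^j)$ with an unspecified class-$\mathcal{K}$ function $\delta$ (implicitly $\delta(r)=r^2$), and reports the gain $\sup_j\|d^j\|/\sqrt{\lambda_{\min}(P)\lambda}$. That term is never derived. As you note, $d^k$ already sits inside the constraint, so the paper's iterated inequality holds only as a weakening of $V(e^k)\le(1-\lambda)^kV(e^0)$.

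Your zero-gain $\mathcal{KL}$ bound is therefore the sharper and more transparent conclusion. Under the feasibility hypothesis, ISS is immediate, and all dependence on $d^k$ is hidden inside that hypothesis.

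Your robust variant (contract only the nominal successor $Ae^k+Bu_i^k$, then apply Young's inequality) is what actually produces a bound of the paper's displayed form $\beta(\|e^0\|,k)+c\sup_j\|d^j\|$. However, it concerns a modified controller, not the QCQP \eqref{eq:qcqp_formulation}. Present it as an extension, not as part of the proof of the stated theorem.

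You also make explicit the assumption $Q_c\preceq P$, which the paper uses without stating it. Under that assumption, the identity you flag holds for the principal square root: since $I-P^{-1}Q_c=P^{-1/2}(I-P^{-1/2}Q_cP^{-1/2})P^{1/2}$, one gets $\|L\sqrt{I-P^{-1}Q_c}\,e\|_2^2=e^\top(P-Q_c)e$.

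One minor fix: if $\rho=0$, which forces $Q_c=P$, replace $\rho$ by any number in $(0,1)$ so that $\beta$ is genuinely of class $\mathcal{KL}$.
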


\begin{proof}
To prove ISS, we establish a dissipation inequality using the quadratic Lyapunov candidate $V(e^k) = (e^{k})^\top P e^k$. From the quadratic form properties, it holds that $\lambda_{\min}(P)\|e^k\|^2 \le V(e^k) \le \lambda_{\max}(P)\|e^k\|^2$. 

The feasibility of the contractive constraint \eqref{eq:socp_constraint} at each step $k$ ensures:
\begin{equation}
    V(e^{k+1}) \le V(e^k) - \lambda_{\min}(Q_c) \|e^k\|^2. \label{eq:v_decay_initial}
\end{equation}
By defining the decay ratio $\lambda = \lambda_{\min}(Q_c)/\lambda_{\max}(P) \in (0, 1)$, \eqref{eq:v_decay_initial} can be rewritten as:
\begin{equation}
    V(e^{k+1}) \le (1 - \lambda) V(e^k). \label{eq:v_iterative}
\end{equation}
However, the actual error dynamics \eqref{eq:error_dynamics} involve the reference drift $d^k$. In the presence of $d^k$, the constraint \eqref{eq:socp_constraint} remains feasible only if the control $u_i^k$ can counteract the drift. Iterating \eqref{eq:v_iterative} from $j=0$ to $k-1$ yields:
\begin{equation}
    V(e^k) \le (1-\lambda)^k V(e^0) + \sum_{j=0}^{k-1} (1-\lambda)^{k-j-1} \delta(d^j),
\end{equation}
where $\delta(\cdot)$ is a class $\mathcal{K}$ function representing the perturbation induced by the reference shift. Using the property of geometric series $\sum_{j=0}^{k-1} (1-\lambda)^{k-j-1} < 1/\lambda$, and taking the square root of the Lyapunov bounds, we obtain:
\begin{equation}
\small
\begin{split}
    \|e^k\| \le & \sqrt{\frac{\lambda_{\max}(P)}{\lambda_{\min}(P)}} (1-\lambda)^{k/2} \|e^0\| 
    + \frac{1}{\sqrt{\lambda_{\min}(P) \cdot \lambda}} \sup_{0 \le j < k} \|d^j\|.
\end{split}
\end{equation}
The first term corresponds to the class $\mathcal{KL}$ function $\beta(\|e^0\|, k)$ which decays exponentially, and the second term corresponds to the class $\mathcal{K}$ function $\gamma(\sup \|d^j\|)$, which defines the ultimate tracking bound. Thus, the system is ISS with respect to the reference propagation drift.
\end{proof}
\begin{remark}[Predictive Alignment and Feasibility]
The feasibility of \eqref{eq:qcqp_formulation} depends on whether the controller can manage the drift $\|d^k\|$ within its physical limits. By incorporating future barycenters, the linear term $\mathbf{g}_T$ allows the agent to preemptively align with future targets before the tracking error becomes critically large. This proactive movement keeps the state well within the safety boundary of the SOC constraint \eqref{eq:socp_constraint}, ensuring recursive feasibility even during rapid density shifts.
\end{remark}


\subsection{Numerical Resolution via Soft-Constrained QCQP}
\label{subsec:numerical_qcqp}

To ensure recursive feasibility under strict control saturation constraints $u_i^k \in \mathcal{U}_i := \{u \mid u_{\min} \le u \le u_{\max}\}$, we reformulate the original QCQP \eqref{eq:qcqp_formulation} by introducing a slack variable $\epsilon \ge 0$. This relaxation transforms the hard contractive constraint \eqref{eq:socp_constraint} into a soft constraint, guaranteeing that a feasible solution exists even when the required energy decay rate is physically unattainable due to the agent's limited control authority. 

The relaxed optimization problem is formulated as follows:
\begin{subequations}
\label{eq:soft_qcqp_formulation}
\begin{align}
\min_{u_i^k, \epsilon} & \quad \frac{1}{2} (u_i^{k})^{\top} \mathbf{H}_T u_i^k + \mathbf{g}_T^\top u_i^k + \rho \epsilon^2 \\
\text{s.t.} & \quad \| L (A e^k + B u_i^k + d^k) \|_2 \le \mathcal{R}(e^k) + \epsilon, \label{eq:soft_soc_constraint} \\
& \quad u_i^k \in \mathcal{U}, \quad \epsilon \ge 0,
\end{align}
\end{subequations}
where $\mathcal{R}(e^k) := \| L \sqrt{I - P^{-1}Q_c} \, e^k \|_2$ denotes the state-dependent stability radius and $\rho \gg 0$ is a penalty parameter. In this framework, the agent prioritizes the minimization of the tracking error while satisfying the stability-induced contraction to the greatest extent possible under saturation.

\subsection{Dual-Newton Solver for Real-Time Execution}\label{subsec:dual_newton}For resource-constrained on-board processing, solving \eqref{eq:soft_qcqp_formulation} via generic interior-point methods (IPMs) can be computationally expensive. Exploiting the fact that the control input dimension $m$ is typically much smaller than the state dimension, we derive a specialized Dual-Newton solver that exploits the analytical structure of the condensed QP.

By defining the stability residual as $f(u_i^k) := \| L(Ae^k + Bu_i^k + d^k) \|_2 - \mathcal{R}(e^k)$, the optimal control is obtained by iteratively updating the Lagrange multiplier $\mu \ge 0$ associated with \eqref{eq:soft_soc_constraint}. Setting the gradient of the Lagrangian to zero ($\nabla_u \mathcal{L} = 0$) yields the following control law parameterized by $\mu$:
\begin{equation}
\begin{split}
u^*(\mu) = \text{proj}_{\mathcal{U}} \Big( &-(\mathbf{H}_T + \mu \mathbf{B}^\top P \mathbf{B})^{-1} \\
&\cdot (\mathbf{g}_T + \mu \mathbf{B}^\top P (A e^k + d^k)) \Big).
\end{split}
\end{equation}
The optimal $\mu^*$ is found using the The optimal multiplier is found via the Newton-Raphson update:
\begin{equation}
\mu^{\nu+1} = \max \left( 0, \mu^{\nu} - \frac{f(u(\mu^{\nu}))}{\nabla_\mu f(u(\mu^{\nu}))} \right),
\end{equation}
where $\nu$ is the inner iteration index and $\nabla_\mu f$ denotes the sensitivity of the constraint residual with respect to the dual variable. This custom solver achieves sub-millisecond convergence, providing the necessary scalability for dense coverage tasks while maintaining the practical ISS property.

\begin{figure*}[!t]
    \centering
    \subfloat[Full KKT]{
    \includegraphics[width=0.275\linewidth]{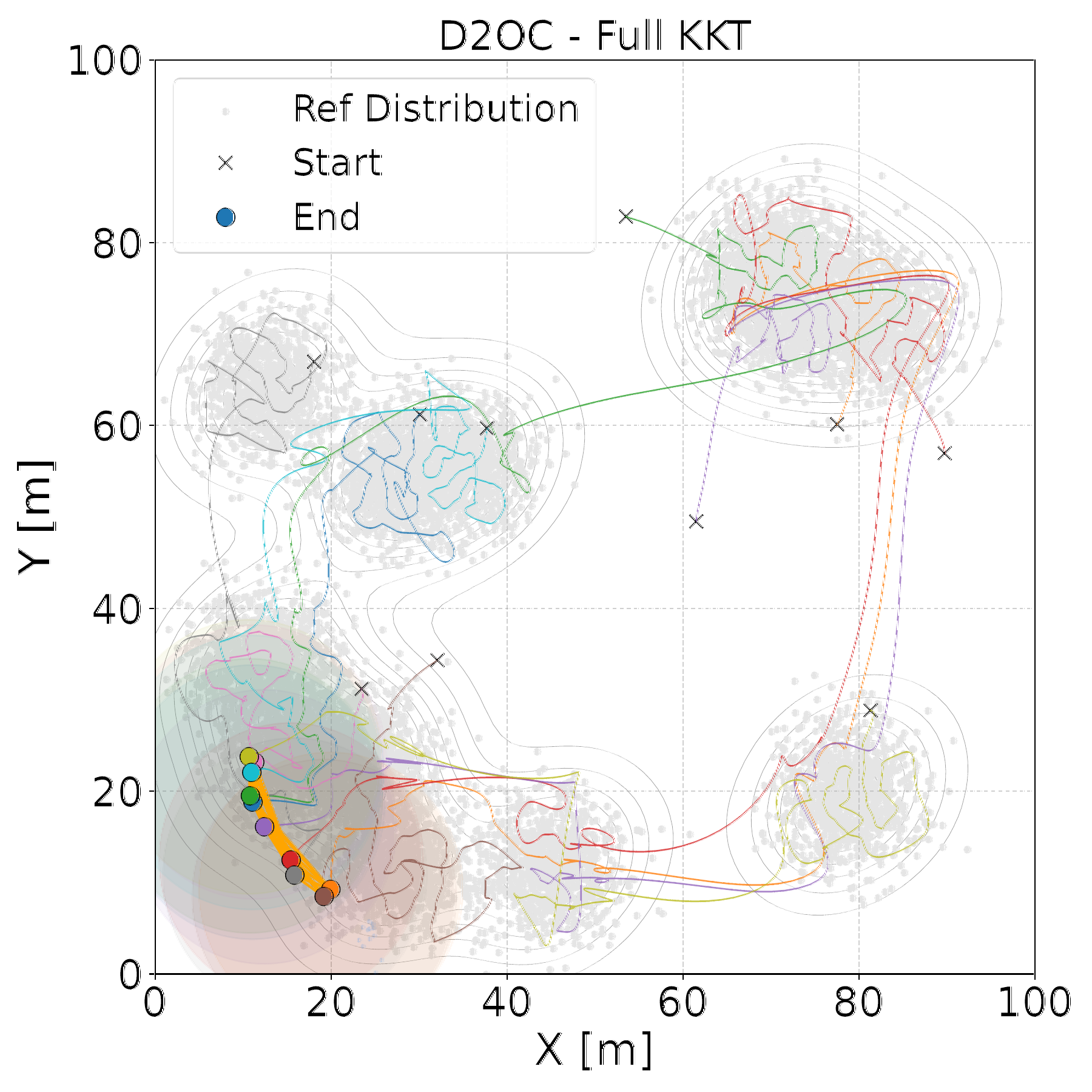}
    }\quad
    \subfloat[Reduced KKT]{
    \includegraphics[width=0.275\linewidth]{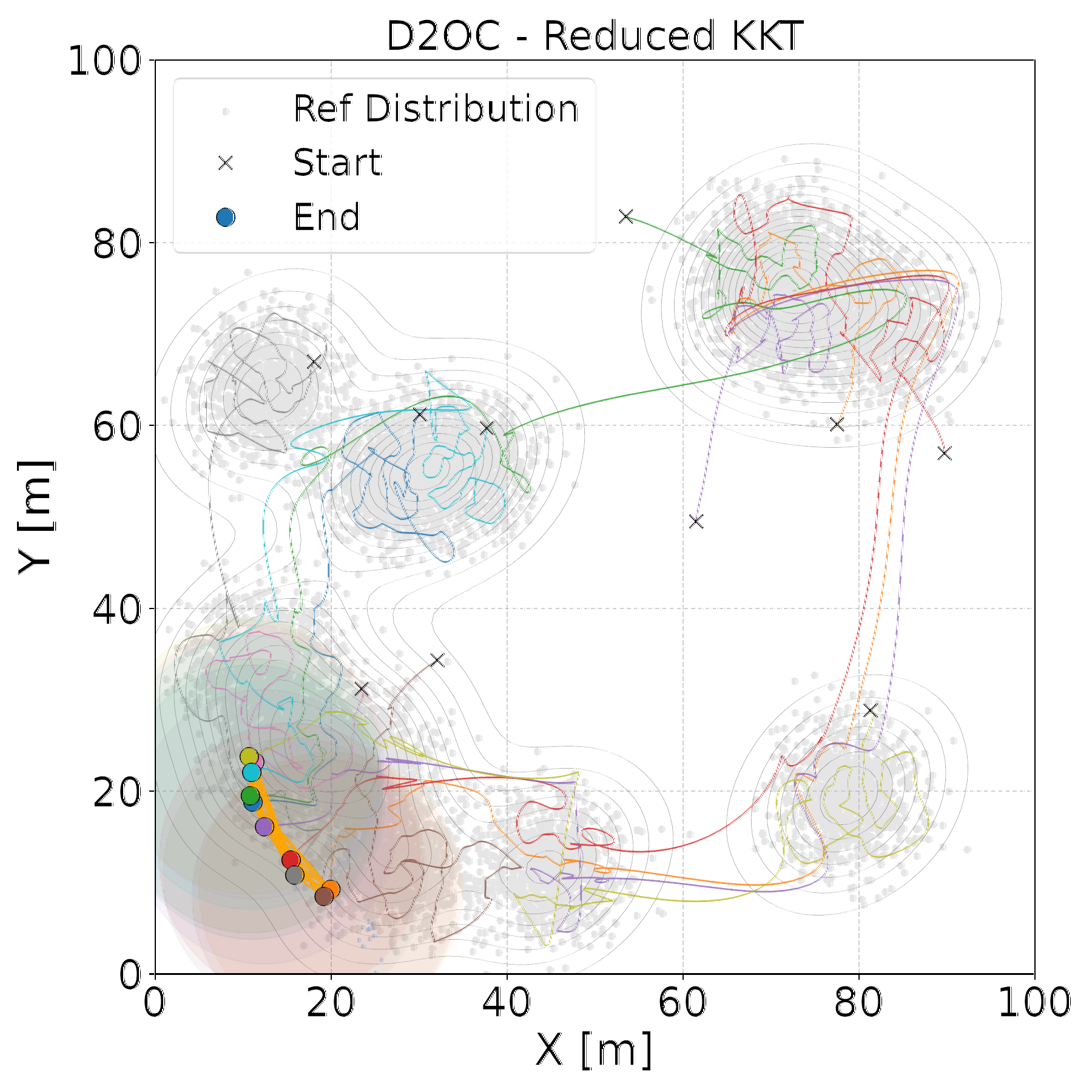}
    }\quad
    \subfloat[Reduced KKT w/ Stability]{
    \includegraphics[width=0.275\linewidth]{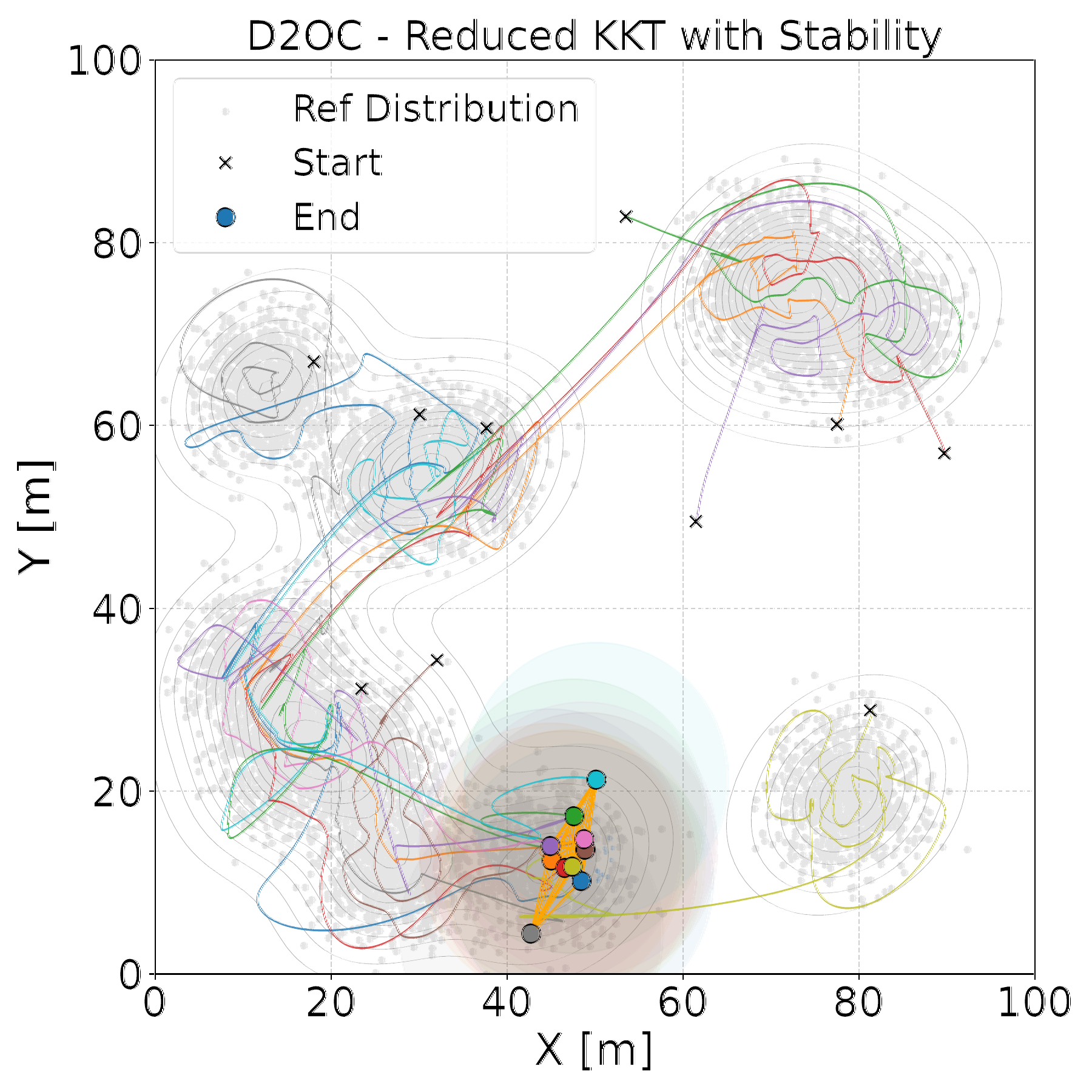}
    }
    \caption{Multi-agent coverage trajectories via D$^2$OC: (a) Full KKT baseline, (b) proposed reduced KKT formulation showing identical optimality, and (c) reduced KKT with recursive stability constraints.}
    \label{fig: traj}
\end{figure*}

\section{Simulation}
The performance and robustness of the proposed $O(T)$ D$^2$OC framework were evaluated against a conventional full KKT-based solver. We considered a multi-agent task with $N=10$ agents in a $100 \times 100\,\text{m}^2$ domain, where each agent follows linearized 8th-order quadrotor dynamics on a 2D plane. The communication range for each agent is set to $15\,\text{m}$, denoted by the semi-transparent circles around the agents in Fig.~\ref{fig: traj}. The target field is modeled as a non-convex Gaussian Mixture Model (GMM). To ensure consistent comparison, all simulations for horizons $T \in \{10, \dots, 60\}$ were executed until reaching 99\% global coverage.

\subsection{Trajectory Equivalency and Stability Analysis}
Multi-agent trajectories are presented in Fig.~\ref{fig: traj}, where full KKT solver and the proposed reduced KKT formulation results are shown in Figs.~\ref{fig: traj}(a) and (b), respectively. The paths are numerically identical, which confirms that the analytical reduction via Theorem~2 preserves the exact KKT optimality conditions of the original problem. Furthermore, Fig.~\ref{fig: traj}(c) demonstrates the results when augmented with stability constraints. Although paths deviate slightly to remain within the Lyapunov-based feasible region, agents successfully converge to the target distribution. This highlights the ability to ensure closed-loop stability without sacrificing the coverage mission.

\subsection{Computational Scalability and Efficiency}\label{subsec:comp_scalability}The computation time is depicted in Fig.~\ref{fig:comp_time_comparison}, where the horizon $T$ is varied from 10 to 60 in increments of 10. While the full KKT solver's execution time escalates to 26.46~ms at $T=60$, the proposed method maintains a consistent processing time of 2.70~ms. This 9.78-fold speedup validates the $O(T)$ complexity of our formulation, providing a substantial computational margin for high-frequency control loops where conventional solvers fail at large horizons.

Beyond mean execution time, the proposed method ensures timing predictability. As shown by the shaded regions in Fig.~\ref{fig:comp_time_comparison}, the full KKT solver suffers from numerical jitter with a standard deviation ($\sigma$) up to 3.77~ms. In contrast, the proposed method maintains extreme consistency with $\sigma = 0.25$~ms at $T=60$. This jitter suppression is a direct consequence of the reduced dimensionality and well-conditioned optimization landscape, which is paramount for safety-critical systems where timing-induced delays must be minimized.

\begin{figure}[!b]
    \centering
    \includegraphics[width=0.85\linewidth]{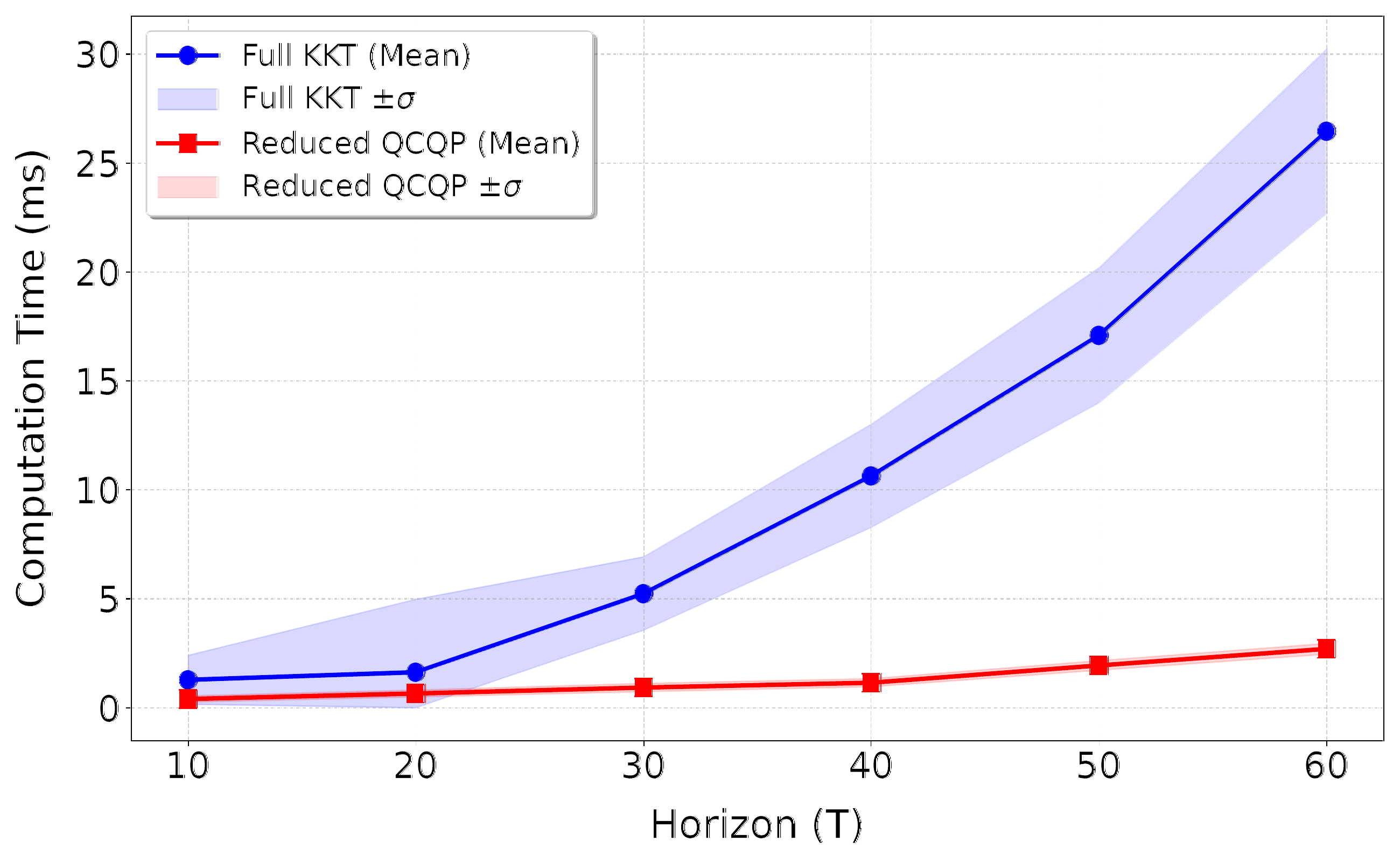}
    \caption{Computation time comparison b/w the full KKT and the reduced KKT with stability guarantee.}
    \label{fig:comp_time_comparison}
\end{figure}

\section{Conclusion}
In this paper, we presented an analytical structural reduction of the $T$-horizon D$^2$OC problem into a computationally efficient $m$-dimensional QP. By leveraging the specific block-triangular structure of the KKT matrix, we derived a closed-form solution that incorporates multi-stage target barycenters without the cubic growth in computational complexity. Furthermore, our stability analysis proves that the resulting condensed MPC framework ensures ISS of the tracking error, even under time-varying reference drifts. The synergy between the linear term $\mathbf{g}_T$ and the contractive Lyapunov constraints ensures that the agent preemptively aligns with future density evolution while maintaining recursive feasibility. Our results demonstrate that the proposed framework achieves both the numerical stability of high-dimensional optimality conditions and the real-time scalability required for complex multi-agent coverage tasks.


\bibliographystyle{IEEEtran}
\bibliography{reference}

@book{villani2009optimal,
  title={Optimal transport: old and new},
  author={Villani, C{\'e}dric and others},
  volume={338},
  year={2009},
  publisher={Springer}
}

@ARTICLE{lee2026optimal,
  author={Lee, Kooktae},
  journal={IEEE Transactions on Automatic Control}, 
  title={Optimal Transport-Based Decentralized Multi-Agent Distribution Matching}, 
  year={2026},
  volume={},
  number={},
  pages={1-8},
  doi={10.1109/TAC.2026.3668445}}

@article{freudenthaler2020pde,
  title={PDE-based multi-agent formation control using flatness and backstepping: Analysis, design and robot experiments},
  author={Freudenthaler, Gerhard and Meurer, Thomas},
  journal={Automatica},
  volume={115},
  pages={108897},
  year={2020},
  publisher={Elsevier}
}

@article{zheng2021transporting,
  title={Transporting robotic swarms via mean-field feedback control},
  author={Zheng, Tongjia and Han, Qing and Lin, Hai},
  journal={IEEE Transactions on Automatic Control},
  volume={67},
  number={8},
  pages={4170--4177},
  year={2021},
  publisher={IEEE}
}

@article{krishnan2025distributed,
  title={Distributed online optimization for multi-agent optimal transport},
  author={Krishnan, Vishaal and Mart{\'\i}nez, Sonia},
  journal={Automatica},
  volume={171},
  pages={111880},
  year={2025},
  publisher={Elsevier}
}

@article{du2017pursuing,
  title={Pursuing an evader through cooperative relaying in multi-agent surveillance networks},
  author={Du, Sheng-Li and Sun, Xi-Ming and Cao, Ming and Wang, Wei},
  journal={Automatica},
  volume={83},
  pages={155--161},
  year={2017},
  publisher={Elsevier}
}

@inproceedings{notomista2022multi,
  title={Multi-robot persistent environmental monitoring based on constraint-driven execution of learned robot tasks},
  author={Notomista, Gennaro and Pacchierotti, Claudio and Giordano, Paolo Robuffo},
  booktitle={2022 international conference on robotics and automation (icra)},
  pages={6853--6859},
  year={2022},
  organization={IEEE}
}

@article{pavone2010adaptive,
  title={Adaptive and distributed algorithms for vehicle routing in a stochastic and dynamic environment},
  author={Pavone, Marco and Frazzoli, Emilio and Bullo, Francesco},
  journal={IEEE Transactions on automatic control},
  volume={56},
  number={6},
  pages={1259--1274},
  year={2010},
  publisher={IEEE}
}

@article{seo2025smcs,
  author={Seo, Sungjun and Lee, Kooktae},
  journal={IEEE Transactions on Systems, Man, and Cybernetics: Systems}, 
  title={Density-Driven Optimal Control for Efficient and Collaborative Multiagent Nonuniform Coverage}, 
  year={2025},
  volume={55},
  number={12},
  pages={9340-9354},
  doi={10.1109/TSMC.2025.3622075}}

@article{seo2025tcst,
  author={Seo, Sungjun and Lee, Kooktae},
  journal={IEEE Transactions on Control Systems Technology}, 
  title={Density-Driven Multidrone Coordination for Efficient Farm Coverage and Management in Smart Agriculture}, 
  year={2026},
  volume={34},
  number={2},
  pages={711-724},
  doi={10.1109/TCST.2025.3631091}}

@article{lee2025lcss,
  author={Lee, Kooktae and Brook, Ethan},
  journal={IEEE Control Systems Letters}, 
  title={Connectivity-Preserving Multi-Agent Area Coverage via Optimal-Transport-Based Density-Driven Optimal Control (D$^2$OC)}, 
  year={2025}
}

@article{hedac2017,
  title={Ergodicity-based cooperative multiagent area coverage via a potential field},
  author={Ivi{\'c}, Stefan and Crnkovi{\'c}, Bojan and Mezi{\'c}, Igor},
  journal={IEEE transactions on cybernetics},
  volume={47},
  number={8},
  pages={1983--1993},
  year={2016},
  publisher={IEEE}
}

@article{cortes2004,
  author={Cortes, J. and Martinez, S. and Karatas, T. and Bullo, F.},
  journal={IEEE Transactions on Robotics and Automation},
  title={Coverage control for mobile sensing networks},
  year={2004},
  volume={20},
  number={2},
  pages={243-255}
}

@article{mathew2011metrics,

title={Metrics for ergodicity and design of ergodic dynamics for multi-agent systems},

author={Mathew, George and Mezi{\'c}, Igor},

journal={Physica D: Nonlinear Phenomena},

volume={240},

number={4-5},

pages={432--442},

year={2011},

publisher={Elsevier}

}

\end{document}